\definecolor{shadecolor}{rgb}{1,0.8,0.3}
\definecolor{myurlcolor}{rgb}{0.5,0,0}
\definecolor{mycitecolor}{rgb}{0,0,0.8}
\definecolor{myrefcolor}{rgb}{0,0,0.8}
\definecolor{hyperrefcolor}{rgb}{0.5,0,0}
\crefname{equation}{}{}
\crefname{defn}{Definition}{Definitions}
\crefname{thm}{Theorem}{Theorems}
\crefname{lem}{Lemma}{Lemmas}
\newcommand{\Int}{\raisebox{.3\depth}{$\smallint\hspace{-.01in}$}}
\theoremstyle{plain}
\newtheorem{thm}{Theorem}
\newtheorem{lem}[thm]{Lemma}
\newtheorem{cor}[thm]{Corollary}
\theoremstyle{remark}
\theoremstyle{definition}
\newtheorem{defn}[thm]{Definition}
\newcommand{\N}{\mathbb{N}}
\newcommand{\F}{\mathbb{F}}
\newcommand{\GL}{\mathrm{GL}}
\newcommand{\Ob}{\mathrm{Ob}}
\newcommand{\Set}{\mathsf{Set}}
\newcommand{\C}{\mathsf{C}}
\newcommand{\Fin}{\mathsf{Fin}}
\newcommand{\define}[1]{{\bf \boldmath{#1}}}
\title{Groupoid cardinality and random permutations}
\author{John\ C.\ Baez}
\address{School of Mathematics, University of Edinburgh, James Clerk Maxwell Building, Peter Guthrie Tait Road, Edinburgh, UK EH9 3FD}
\email{baez@math.ucr.edu}
\begin{document}

\begin{abstract}
If we treat the symmetric group \(S_n\) as a probability measure space where each element has measure \(1/n!\), then the number of cycles in a permutation becomes a random variable.  The Cycle Length Lemma describes the expected values of products of these random variables.  Here we categorify the Cycle Length Lemma by showing that it follows from an equivalence between groupoids.
\end{abstract}

\maketitle

\section{Introduction}

There is a well-behaved generalization of the concept of cardinality from finite sets to finite groupoids \cite{BD}.   But what is it good for?  As an illustration, here we use it to give a new proof of a known fact about random permutations: the Cycle Length Lemma \cite{Fo2}.   In this lemma one treats the number of \(k\)-cycles in a  permutation of \(n\) things as a random variable, where each permutation occurs with equal probability.   The lemma says that in the limit as \(n \to \infty\), this random variable approaches a Poisson distribution with mean \(1/k\).   Furthermore, in the \(n \to \infty\) limit these random variables become independent for different choices of \(k\).

These are quick rough statements.  In Section \ref{sec:cycle} we state the Cycle Length Lemma in a precise
way.   In Section \ref{sec:categorified} we prove a \emph{categorified} version of the Cycle Length Lemma, which asserts an equivalence of groupoids.  In Section \ref{sec:groupoid} we derive the original version of the lemma from this categorified version by taking the cardinalities of these groupoids.  The categorified version contains more information, so it is not just a trick for proving the original lemma (which is, after all, quite easy to show).  Instead, it reveals the original lemma as a consequence of a stronger fact about groupoids.  

In Section \ref{sec:conclusion} we sketch how some of the ideas here generalize to other finite groups.   

\section{The Cycle Length Lemma}
\label{sec:cycle}

In the theory of random permutations, we treat the symmetric group \(S_n\) as a probability measure space where each element has the same measure, namely \(1/n!\).   Functions \(f \colon S_n \to \mathbb{R}\) then become random variables, and we can study their expected values:
\[ E(f) = \frac{1}{n!} \sum_{\sigma \in S_n} f(\sigma). \]
An important example is the function 
\[ c_k \colon S_n \to \mathbb{N} \]
that counts, for any permutation \(\sigma \in S_n\), its number of cycles of length \(k\), also called \(k\)-cycles.   A well-known but striking fact about random permutations is that whenever \(k \le n\), the expected number of \(k\)-cycles is \(1/k\):
\[ E(c_k) = \frac{1}{k}. \]
This has some nice consequences.  For example, a randomly chosen permutation of any finite set has, on average, one fixed point.  Also, its expected number of cycles is
\[    1 + \frac{1}{2} + \cdots + \frac{1}{n}, \]
which for large \(n\) becomes close to \(\ln n\) plus Euler's constant \(\gamma\).

Another striking fact is that whenever \(j \ne k\) and \(j + k \le n\), so that it's possible for a permutation \(\sigma \in S_n\) to have both a \(j\)-cycle and a \(k\)-cycle, the random variables \(c_j\) and \(c_k\) are uncorrelated in the following sense:
\[ E(c_j c_k) = E(c_j) E(c_k) . \]
You might at first think that having many \(j\)-cycles for some large \(j\) would tend to inhibit the presence of \(k\)-cycles for some other large value of \(k\), but that is not true unless \(j + k > n\), when it suddenly becomes \emph{impossible} to have both a \(j\)-cycle and a \(k\)-cycle!

These two facts are special cases of the Cycle Length Lemma.  To state this lemma in full generality, recall that the number of ordered \(p\)-tuples of distinct elements of an \(n\)-element set is the \define{falling power}
\[ x^{\underline{p}} = x(x-1)(x-2) \, \cdots \, (x-p+1). \]
It follows that the function
\[ c_k^{\underline{p}} \colon S_n \to \mathbb{N} \]
counts, for any permutation in \(S_n\), its ordered \(p\)-tuples of distinct \(k\)-cycles.  We can also replace the word `distinct' here by `disjoint', without changing the meaning, since distinct cycles must be disjoint. 

The two striking facts mentioned above generalize as follows:

\begin{enumerate}
\item First, whenever \(p k \le n\), so that it is \emph{possible} for a permutation in \(S_n\) to have \(p\) distinct \(k\)-cycles, then
\[  E(c_k^{\underline{p}}) = \frac{1}{k^p}. \]
For readers familiar with the moments of a Poisson distribution, here is a nice equivalent way to state this equation: when  \(p k \le n\), the \(p\)th moment of the random variable \(c_k\) equals that of a Poisson distribution with mean \(1/k\).
\item Second, as \(n \to \infty\) the random variables \(c_k\) become better and better approximated by independent Poisson distributions.   To state this precisely we need a bit of notation.  Let \(\vec{p}\) denote an \(n\)-tuple \((p_1 , \dots, p_n)\) of natural numbers, and let
\[  |\vec{p}|  = p_1 + 2p_2 + \cdots + n p_n. \]
If  \(|\vec{p}| \le n\), it is possible for a permutation \(\sigma \in S_n\) to have a collection of distinct cycles, with \(p_1\) cycles of length 1, \(p_2\) cycles of length 2, and so on up to \(p_n\) cycles of length \(n\).   If \(|\vec{p}| > n\), this is impossible.  In the former case, where \(|\vec{p}| \le n\), we always have
\[ E\left( \prod_{k=1}^n c_k^{\underline{p}_k} \right) = \prod_{k=1}^n E( c_k^{\underline{p}_k}) . \]
\end{enumerate}

Taken together, 1) and 2) are equivalent to the Cycle Length Lemma, which may be stated in a unified way as follows:
\vskip 1em
\textbf{The Cycle Length Lemma}.  Suppose \(p_1 , \dots, p_n \in \mathbb{N}\).  Then
\[ E\left( \prod_{k=1}^n c_k^{\underline{p}_k} \right) 
=  \left\{ \begin{array}{ccc}  \displaystyle{ \prod_{k=1}^n \frac{1}{k^{p_k}} } & &  \mathrm{if} \; |\vec{p}| \le n \\ \\
0 & & \mathrm{if} \; |\vec{p}| > n. \end{array} \right. \]

This appears, for example, in Ford's comprehensive review of the statistics of cycle lengths in random permutations \cite[Lem.\ 3.1]{Fo2}.  He attributes it to Watterson \cite[Thm.\ 7]{W}.  The most famous special case is when \(|\vec{p}| = n\), which apparently goes back to Cauchy.

For more details on the sense in which random variables \(c_k\) approach independent Poisson distributions, see Arratia and Tavar\'e \cite{AT}.

\section{The Categorified Cycle Length Lemma}
\label{sec:categorified}

To categorify the Cycle Length Lemma, the key is to treat a permutation as an extra structure that we can put on a set, and then consider the groupoid of \(n\)-element sets equipped with this extra structure:

\begin{defn} Let  \(\mathsf{Perm}_n\) be the groupoid in which 
\begin{itemize}
\item an object is an \(n\)-element set equipped with a permutation \(\sigma \colon X \to X\)
\end{itemize}
and 
\begin{itemize}
\item a morphism from \(\sigma \colon X \to X\)  to \(\sigma' \colon X' \to X'\) is a bijection \(f \colon X \to X'\) that is \define{permutation-preserving} in the following sense: 
\[ f \circ \sigma \circ f^{-1} = \sigma'. \]
\end{itemize}
\end{defn}

We'll need the following strange fact below: if \(n < 0\) then \(\mathsf{Perm}_n\) is the empty groupoid (that is, the groupoid with no objects and no morphisms).

More importantly, we'll need a fancier groupoid where a set is equipped with a permutation together with a list of distinct cycles of specified lengths.  For any \(n \in \mathbb{N}\) and any \(n\)-tuple of natural numbers \(\vec{p} = (p_1 , \dots, p_n)\), recall that we have defined
\[  |\vec{p}| = p_1 + 2p_2 + \cdots + n p_n. \]

\begin{defn}
Let \(\C_{\vec{p}}\) be the groupoid of \(n\)-element sets \(X\) equipped with a permutation \(\sigma \colon X \to X\) that is in turn equipped with a choice of an ordered \(p_1\)-tuple of distinct \(1\)-cycles, an ordered \(p_2\)-tuple of distinct \(2\)-cycles, and so on up to an ordered \(p_n\)-tuple of distinct \(n\)-cycles.  A morphism in this groupoid is a bijection that is permutation-preserving and also preserves the ordered tuples of distinct cycles.  
\end{defn}

Note that if \(|\vec{p}| > n\), no choice of disjoint cycles with the specified property exists, so \(\C_{\vec{p}}\) is the empty groupoid.   

Finally, we need a bit of standard notation.  For any group \(G\) we write \(\mathsf{B}(G)\) for its \define{delooping}: that is, the groupoid that has one object \(\star\) and \(\mathrm{Aut}(\star) = G\).

\begin{thm}
{\bf (The Categorified Cycle Length Lemma.)} For any \(\vec{p} = (p_1 , \dots, p_n) \in \mathbb{N}^n\) we have   
\[ \C_{\vec{p}} \simeq \mathsf{Perm}_{n - |\vec{p}|} \; \times \; \prod_{k = 1}^n \mathsf{B}(\mathbb{Z}/k)^{p_k}. \]
\end{thm}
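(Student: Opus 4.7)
My plan is to construct the equivalence as a ``decomposition/reassembly'' pair of functors, routed through an intermediate groupoid. For each $k \geq 1$, let $\D_k$ denote the groupoid whose objects are $k$-element sets equipped with a cyclic permutation and whose morphisms are permutation-preserving bijections. Since any two $k$-cycles are isomorphic and the automorphism group of a $k$-cycle is the cyclic group of order $k$ generated by the permutation itself, $\D_k \simeq \mathsf{B}(\mathbb{Z}/k)$. Consequently $\prod_{k=1}^n \D_k^{p_k} \simeq \prod_{k=1}^n \mathsf{B}(\mathbb{Z}/k)^{p_k}$, so it suffices to exhibit an equivalence $\C_{\vec{p}} \simeq \mathsf{Perm}_{n - |\vec{p}|} \times \prod_k \D_k^{p_k}$.

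The decomposition functor $F$ from $\C_{\vec{p}}$ to this product is defined as follows. On an object $(X, \sigma, (C_{k,i}))$, set $Y = \bigsqcup_{k,i} C_{k,i} \subseteq X$; this is well-defined since distinct cycles are disjoint, has cardinality $|\vec{p}|$, and is $\sigma$-invariant. Then $\sigma$ restricts to a permutation of $X \setminus Y$ and to a cyclic permutation on each $C_{k,i}$, and $F$ sends $(X, \sigma, (C_{k,i}))$ to the pair consisting of $(X \setminus Y, \sigma|_{X \setminus Y})$ in $\mathsf{Perm}_{n - |\vec{p}|}$ together with $((C_{k,i}, \sigma|_{C_{k,i}}))$ in $\prod_k \D_k^{p_k}$. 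On morphisms, a permutation-preserving bijection sending each $C_{k,i}$ to $C'_{k,i}$ restricts coordinate-wise, so $F$ is well-defined.

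The reassembly functor $G$ goes in the opposite direction by disjoint union: it sends $((Y, \tau), (C_{k,i}, \gamma_{k,i}))$ to the set $X = Y \sqcup \bigsqcup_{k,i} C_{k,i}$ with permutation $\sigma = \tau \sqcup \bigsqcup_{k,i} \gamma_{k,i}$, together with the $C_{k,i}$'s taken in their prescribed order as the chosen tuples of cycles. Then $F \circ G$ is strictly the identity after canonical identification of $X \setminus \bigsqcup C_{k,i}$ with $Y$, and $G \circ F$ is naturally isomorphic to the identity via the canonical permutation-preserving bijection $(X \setminus Y) \sqcup Y \to X$ given by the inclusions of both summands.

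The main point to watch is the bookkeeping of $\sigma$-invariant disjoint-union decompositions and the verification that the natural isomorphisms above assemble coherently; I expect no serious obstacle beyond this, since the only genuinely nontrivial input is the identification $\D_k \simeq \mathsf{B}(\mathbb{Z}/k)$, which in turn uses nothing more than the classification of $k$-cycles up to isomorphism together with the fact that any permutation-preserving self-bijection of a $k$-cycle is a power of the cycle itself.
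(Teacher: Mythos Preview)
Your argument is correct and rests on the same decomposition as the paper's: peel off the chosen cycles, each contributing a copy of $\mathsf{B}(\mathbb{Z}/k)$, and leave the residual permutation on the $(n-|\vec{p}|)$-element complement. The only difference is packaging---the paper restricts to the full subcategory supported on a single fixed $n$-set with a fixed partition into blocks $S_{k\ell}$ and reads off the product directly from that skeleton, whereas you build explicit quasi-inverse functors via restriction and disjoint union; both are standard ways to exhibit the same equivalence.
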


\begin{proof}
Both sides are empty groupoids when \(|\vec{p}| > n\), so assume \(|\vec{p}| \le n\).   A groupoid is equivalent to any full subcategory of that groupoid containing at least one object from each isomorphism class.   So, fix an \(n\)-element set \(X\) and a subset \(Y \subseteq X\) with \(n - |\vec{p}|\) elements.   Partition \(X - Y\) into subsets \(S_{k\ell}\) where \(S_{k \ell}\) has cardinality \(k\), \(1 \le k \le n\), and \(1 \le \ell \le p_k\).  Every object of \(\C_{\vec{p}}\) is isomorphic to the chosen set \(X\) equipped with some permutation \(\sigma \colon X \to X\) that has each subset \(S_{k \ell}\) as a \(k\)-cycle.  Thus \(\C_{\vec{p}}\) is equivalent to its full subcategory containing only objects of this form.

An object of this form consists of an arbitrary permutation \(\sigma_Y \colon Y \to Y\) and a cyclic permutation \(\sigma_{k \ell} \colon S_{k \ell} \to S_{k \ell}\) for each \(k,\ell\) as above.   Consider a second object of this form, say \(\sigma'_Y \colon Y \to Y\) equipped with cyclic permutations \(\sigma'_{k \ell}\).  Then a morphism from the first object to the second consists of two pieces of data.  First, a bijection
\[ f \colon Y \to Y \]
such that
\[ \sigma'_Y = f \circ \sigma_Y \circ f^{-1}. \]
Second, for each \(k,\ell\) as above, a bijection
\[ f_{k \ell} \colon S_{k \ell} \to S_{k \ell} \]
such that
\[ \sigma'_{k \ell} = f_{k \ell} \circ \sigma_{k \ell} \circ f_{k \ell}^{-1}. \]

Since \(Y\) has \(n - |\vec{p}|\) elements, while \(\sigma_{k \ell}\) and \(\sigma'_{k \ell}\) are cyclic permutations of \(k\)-element sets, it follows that \(\C_{\vec{p}}\) is equivalent to 
\[ \mathsf{Perm}_{n - |\vec{p}|} \; \times \; \prod_{k = 1}^n \mathsf{B}(\mathbb{Z}/k)^{p_k}. \qedhere \]
\end{proof}

The case where \(|\vec{p}| = n \) is especially pretty, since then our chosen cycles completely fill up our \(n\)-element set and we have
\[ \C_{\vec{p}} \simeq \prod_{k = 1}^n \mathsf{B}(\mathbb{Z}/k)^{p_k}.  \]

\section{Groupoid Cardinality}
\label{sec:groupoid}

The cardinality of finite sets has a natural extension to finite groupoids, which turns out to be the key to extracting results on random permutations from category theory.   We briefly recall this concept \cite{BD}.  Any finite groupoid \(\mathsf{G}\) is equivalent to a coproduct of finitely many one-object groupoids, which are deloopings of finite groups \(G_1, \dots, G_m\):
\[ \mathsf{G} \simeq \sum_{i = 1}^m \mathsf{B}(G_i), \]
and then the \define{cardinality} of \(\mathsf{G}\) is defined to be
\[ |\mathsf{G}| = \sum_{i = 1}^m \frac{1}{|G_i|}. \]
This concept of groupoid cardinality has various nice properties.  For example it is additive:
\[ |\mathsf{G} + \mathsf{H}| = |\mathsf{G}| + |\mathsf{H}| \]
and multiplicative:
\[ |\mathsf{G} \times \mathsf{H}| = |\mathsf{G}| \times |\mathsf{H}| \]
and invariant under equivalence of groupoids:
\[ \mathsf{G} \simeq \mathsf{H} \implies |\mathsf{G}| = |\mathsf{H}|. \]

None of these three properties forces us to define \(|\mathsf{G}|\) as the sum of the \emph{reciprocals} of the cardinalities \(|G_i|\): any other power of these cardinalities would work just as well.  What makes the reciprocal cardinalities special is that if \(G\) is a finite group acting on a set \(S\), we have 
\[ |S\sslash G| = |S|/|G| \]
where the groupoid \(S \sslash G\) is the \define{weak quotient} or \define{homotopy quotient} of \(S\) by \(G\), also called the \define{action groupoid}.  This is the groupoid with elements of \(S\) as objects and one morphism from \(s\) to \(s'\) for each \(g \in G\) with \(g s = s'\), with composition of morphisms coming from multiplication in \(G\).  

The groupoid of \(n\)-element sets equipped with permutation, \(\mathsf{Perm}_n\), has a nice description in terms of weak quotients:

\begin{lem} 
\label{lem:Perm}
For all \(n \in \mathbb{N}\) we have an equivalence of groupoids
\[ \mathsf{Perm}_n \simeq S_n \sslash S_n \]
where the group \(S_n\) acts on the underlying set of \(S_n\) by conjugation.
\end{lem}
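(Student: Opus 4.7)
The plan is to exhibit an explicit equivalence of groupoids by recognizing \(S_n \sslash S_n\) as a skeleton of \(\mathsf{Perm}_n\). Fix the standard \(n\)-element set \(\underline{n} = \{1, \dots, n\}\), and define a functor \(F \colon S_n \sslash S_n \to \mathsf{Perm}_n\) on objects by sending a permutation \(\sigma \in S_n\) to the object \((\underline{n}, \sigma)\), and on morphisms by sending a morphism \(g \colon \sigma \to \sigma'\) in \(S_n \sslash S_n\) (i.e., an element \(g \in S_n\) such that \(g \sigma g^{-1} = \sigma'\)) to the bijection \(g \colon \underline{n} \to \underline{n}\), which is then permutation-preserving from \((\underline{n}, \sigma)\) to \((\underline{n}, \sigma')\) essentially by definition. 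Functoriality is immediate since composition in \(S_n \sslash S_n\) is multiplication in \(S_n\), which matches composition of bijections.

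Next I would verify essential surjectivity and full faithfulness. For essential surjectivity, given any object \((X, \tau) \in \mathsf{Perm}_n\), pick any bijection \(h \colon X \to \underline{n}\); then \(h\) is a permutation-preserving isomorphism from \((X, \tau)\) to \((\underline{n}, h \tau h^{-1}) = F(h \tau h^{-1})\). For full faithfulness, the morphisms in \(\mathsf{Perm}_n\) from \(F(\sigma) = (\underline{n}, \sigma)\) to \(F(\sigma') = (\underline{n}, \sigma')\) are, by definition, exactly those bijections \(g \colon \underline{n} \to \underline{n}\) with \(g \sigma g^{-1} = \sigma'\), which coincides precisely with \(\mathrm{Hom}_{S_n \sslash S_n}(\sigma, \sigma')\).

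There is no real obstacle here; the lemma is essentially a tautology once one observes that the definition of morphisms in \(\mathsf{Perm}_n\) (conjugation relation) is identical to the definition of morphisms in the action groupoid \(S_n \sslash S_n\) for the conjugation action. The only thing worth being careful about is the edge case \(n = 0\), which I would dispatch by noting that both groupoids have a single object and a single (identity) morphism.
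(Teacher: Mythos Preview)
Your proof is correct and follows essentially the same approach as the paper: both fix the standard set \(\{1,\dots,n\}\) and identify \(S_n \sslash S_n\) with the full subcategory of \(\mathsf{Perm}_n\) on objects with that underlying set, observing that this subcategory meets every isomorphism class. The paper phrases this using the general principle that a groupoid is equivalent to any full subcategory containing a representative of each isomorphism class, while you spell out essential surjectivity and full faithfulness explicitly, but the content is the same.
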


\begin{proof}
We use the fact that \(\mathrm{Perm}_n\) is equivalent to any full subcategory of  \(\mathrm{Perm}_n\) containing at least one object from each isomorphism class.  For \(\mathsf{Perm}_n\) we can get such a subcategory by fixing an \(n\)-element set, say \(X = \{1,\dots, n\}\), and taking only objects of the form \(\sigma \colon X \to X\), i.e. \(\sigma \in S_n\).  A morphism from \(\sigma \in S_n\) to \(\sigma' \in S_n\) is then a permutation \(\tau \in S_n\) such that
\[ \sigma' = \tau  \sigma \tau^{-1} .\]
But this subcategory is precisely \(S_n \sslash S_n\).  
\end{proof}

\begin{cor} For all \(n \in \mathbb{N}\) we have
\[ |\mathrm{Perm}_n| = 1. \]
\end{cor}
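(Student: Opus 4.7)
The plan is to combine the preceding lemma with the two properties of groupoid cardinality recalled in this section: invariance under equivalence, and the formula $|S \sslash G| = |S|/|G|$ for the weak quotient of a finite $G$-set.

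First, I would invoke Lemma \ref{lem:Perm} to replace $\mathrm{Perm}_n$ with the equivalent groupoid $S_n \sslash S_n$, where $S_n$ acts on (the underlying set of) itself by conjugation. Since groupoid cardinality is invariant under equivalence, this gives
\[ |\mathrm{Perm}_n| = |S_n \sslash S_n|. \]
Next, I would apply the weak quotient formula to the action of the finite group $S_n$ on the finite set underlying $S_n$, obtaining
\[ |S_n \sslash S_n| = \frac{|S_n|}{|S_n|} = 1. \]
Combining these two equalities yields $|\mathrm{Perm}_n| = 1$.

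There is essentially no obstacle here: the hard work has already been done in Lemma \ref{lem:Perm}, and the statement is just the numerical shadow of that equivalence together with the defining property of groupoid cardinality for action groupoids. The only thing worth mentioning is the edge case $n = 0$, which is handled the same way since $S_0$ is the trivial group acting on the one-point set $S_0$, again giving cardinality $1/1 = 1$.
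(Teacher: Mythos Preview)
Your proof is correct and follows exactly the same approach as the paper: apply the lemma to replace \(\mathrm{Perm}_n\) by \(S_n \sslash S_n\), then use the weak quotient formula \(|S \sslash G| = |S|/|G|\) to get \(|S_n|/|S_n| = 1\). The paper's proof is just a one-line version of what you wrote.
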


\begin{proof}  We have \(|\mathrm{Perm}_n| = |S_n \sslash S_n| = |S_n|/|S_n| = 1\).  
\end{proof}

For another useful perspective, note that for any finite groupoid \(\mathsf{G}\) we have
\[   |\mathsf{G}| = \sum_{g \in \mathsf{G}} \frac{1}{|\mathrm{out}(g)|}   \]
where \(\mathrm{out}(g)\) is the set of all morphisms out of the object \(g \in \mathsf{G}\).    The groupoid \(S_n \sslash S_n\) has one object for each \(\sigma \in S_n\), and there are \(n!\) morphisms out of each object, so the groupoid cardinality of \(S_n \sslash S_n\) is \(1\).   

This clarifies why we can prove results on random permutations using the groupoid \(\mathrm{Perm}_n\): this groupoid is equivalent to \(S_n \sslash S_n\), which has one object for each permutation in \(S_n\), each contributing \(1/n!\) to the groupoid cardinality.

Now let us use these ideas to derive the original Cycle Length Lemma from the categorified version.

\begin{thm} {\bf (The Cycle Length Lemma.)} 
\label{thm:cycle_length_lemma}
 Suppose \(p_1 , \dots, p_n \in \mathbb{N}\).  Then
\[ E\left( \prod_{k=1}^n C_k^{\underline{p}_k} \right) 
=  \left\{ \begin{array}{ccc}  \displaystyle{ \prod_{k=1}^n \frac{1}{k^{p_k}} } & &  \mathrm{if} \; |\vec{p}| \le n \\ \\
0 & & \mathrm{if} \; |\vec{p}| > n. \end{array} \right. \]
\end{thm}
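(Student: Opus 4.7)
The plan is to apply groupoid cardinality to both sides of the Categorified Cycle Length Lemma. The right hand side is easy to evaluate: cardinality is multiplicative, $|\mathsf{B}(\mathbb{Z}/k)| = 1/k$, and by the corollary just proved $|\mathsf{Perm}_m| = 1$ whenever $m \in \mathbb{N}$, while if $|\vec{p}| > n$ then $\mathsf{Perm}_{n - |\vec{p}|}$ is the empty groupoid (the strange fact noted in Section~\ref{sec:categorified}) and so has cardinality $0$. Thus the cardinality of the right hand side is $\prod_{k=1}^n 1/k^{p_k}$ when $|\vec{p}| \le n$ and $0$ otherwise, which is exactly the answer predicted by the theorem.

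The substance of the proof is therefore to identify $|\mathsf{C}_{\vec{p}}|$ with the expectation $E\left( \prod_{k=1}^n c_k^{\underline{p}_k} \right)$. I would proceed by replacing $\mathsf{C}_{\vec{p}}$ with an equivalent full subcategory in the spirit of \cref{lem:Perm}: fix $X = \{1,\dots,n\}$ as the underlying set and keep only the objects built on $X$. An object is then a pair $(\sigma, C)$ where $\sigma \in S_n$ and $C$ specifies, for each $k$, an ordered $p_k$-tuple of distinct $k$-cycles of $\sigma$. By definition of the falling power there are exactly $\prod_{k=1}^n c_k(\sigma)^{\underline{p}_k}$ such tuples $C$ for each fixed $\sigma$. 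A morphism out of $(\sigma, C)$ is parametrized by an arbitrary $f \in S_n$, with target $(f\sigma f^{-1}, f C f^{-1})$, which lies again in our subcategory; so every object has exactly $n!$ outgoing morphisms.

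Combining these counts via the identity $|\mathsf{G}| = \sum_{g \in \mathsf{G}} 1/|\mathrm{out}(g)|$ recalled in Section~\ref{sec:groupoid} yields
\[ |\mathsf{C}_{\vec{p}}| \;=\; \frac{1}{n!} \sum_{\sigma \in S_n} \prod_{k=1}^n c_k(\sigma)^{\underline{p}_k} \;=\; E\left( \prod_{k=1}^n c_k^{\underline{p}_k} \right), \]
and comparing with the cardinality of the right hand side above gives the theorem. I do not foresee any real obstacle; the only point requiring thought is that morphisms out of $(\sigma, C)$ are \emph{unconstrained} elements of $S_n$ (since their target is free to move with them), so that the outgoing count is $n!$ uniformly across all objects — everything else is direct bookkeeping.
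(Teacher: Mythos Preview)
Your proposal is correct and follows essentially the same approach as the paper: take groupoid cardinality on both sides of the Categorified Cycle Length Lemma, handle the right-hand side via multiplicativity and the known values $|\mathsf{Perm}_m|$ and $|\mathsf{B}(\mathbb{Z}/k)|$, and identify $|\mathsf{C}_{\vec{p}}|$ with the expectation by restricting to the full subcategory on $X=\{1,\dots,n\}$. The only cosmetic difference is that the paper names this subcategory as an action groupoid $Q_{\vec{p}}\sslash S_n$ and invokes $|S\sslash G|=|S|/|G|$, whereas you count outgoing morphisms directly via $|\mathsf{G}|=\sum_g 1/|\mathrm{out}(g)|$; these are the same computation.
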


\begin{proof} 
We know that
\[ \C_{\vec{p}} \simeq \mathsf{Perm}_{n - |\vec{p}|} \; \times \; \prod_{k = 1}^n \mathsf{B}(\mathbb{Z}/k)^{p_k}. \]
So, to prove the Cycle Length Lemma it suffices to show three things:
\[  |\C_{\vec{p}}| = E\left( \prod_{k=1}^n c_k^{\underline{p}_k} \right) \]
\[ \mathsf{Perm}_{n - |\vec{p}|} = \left\{  \begin{array}{ccc} 1 & & \mathrm{if} \; |\vec{p}| \le n \\ \\
0 & &  \mathrm{if} \; |\vec{p}| > n \end{array} \right. \]
and
\[ |\mathsf{B}(\mathbb{Z}/k)| = 1/k. \]

The last of these is immediate from the definition of groupoid cardinality.  The second follows from the Corollary above, together with the fact that \(\mathsf{Perm}_{n - |\vec{p}|}\) is the empty groupoid when \(|\vec{p}| >  n\).  Thus we are left needing to show that 
\[  |\C_{\vec{p}}| = E\left( \prod_{k=1}^n c_k^{\underline{p}_k} \right). \]
We prove this by computing the cardinality of a groupoid equivalent to \(\C_{\vec p}\).  We claim this groupoid is of the form \(Q_{\vec{p}} \sslash S_n\) where \(Q_{\vec{p}}\) is some set on which \(S_n\) acts.   As a result we have
\[ |\C_{\vec{p}}| = |Q_{\vec{p}} \sslash S_n| = |Q_{\vec{p}}| / n!  \]
and to finish the proof we need to show
\[  E\left( \prod_{k=1}^n c_k^{\underline{p}_k} \right) =  |Q_{\vec{p}}| / n!\, .  \]

What is the set \(Q_{\vec{p}}\), and how does \(S_n\) act on it?   An element of \(Q_{\vec{p}}\) is a permutation \(\sigma \in S_n\) equipped with an ordered \(p_1\)-tuple of distinct \(1\)-cycles, an ordered \(p_2\)-tuple of distinct \(2\)-cycles, and so on up to an ordered \(p_n\)-tuple of distinct \(n\)-cycles.   Any element \(\tau \in S_n\) acts on \(Q_{\vec{p}}\) in a natural way, by conjugating the permutation \(\sigma \in S_n\) to obtain a new permutation, and mapping the chosen cycles of \(\sigma\) to the corresponding cycles of this new conjugated permutation \(\tau \sigma \tau^{-1}\).

Recalling the definition of the groupoid \(\C_{\vec{p}}\), it is clear that any element of \(Q_{\vec{p}}\) gives an object of  \(\C_{\vec{p}}\), and any object is isomorphic to one of this form.   Furthermore any permutation \(\tau \in S_n\) gives a morphism between such objects, all morphisms between such objects are of this form, and composition of these morphisms is just multiplication in \(S_n\).  It follows that
\[ \C_{\vec{p}} \simeq Q_{\vec{p}} \sslash S_n. \]

To finish the proof, note that
\[ E\left( \prod_{k=1}^n c_k^{\underline{p}_k} \right) \]
is \(1/n!\) times the number of ways of choosing a permutation \(\sigma \in S_n\) and equipping it with an ordered \(p_1\)-tuple of distinct \(1\)-cycles, an ordered \(p_2\)-tuple of distinct \(2\)-cycles, and so on.   This is the same as \( |Q_{\vec{p}}| / n!\).  
\end{proof}

\section{Conclusion}
\label{sec:conclusion}

We have opted to treat an example rather than develop a general theory, but many of the ideas here go beyond the symmetric group.   Any finite group \(G\) acts on itself by conjugation and gives a groupoid \(G \sslash G\) of cardinality \(1\).   Any functor \(F \colon G \sslash G \to \Fin\Set\) describes a conjugation-equivariant structure we can put on elements of \(G\), with \(F(g)\) being the set of structures we can put on the element \(g \in G\).   Taking the ordinary cardinality of these sets, we obtain a function \(|F| \colon G \to \N\).   Its expected value with respect to the normalized Haar measure on \(G\) is, by definition,
\[  E(|F|) = \frac{1}{|G|} \sum_{g \in G} |F(g)|   .\]
However, \(E(|F|)\) also equals the cardinality of a certain groupoid for which an object is an element \(g \in G\)  equipped with a structure \(x \in F(g)\).  This groupoid is the familiar \define{category of elements} of  \(F\), denoted \(\Int F\), for which:
\begin{enumerate}
\item an object is a pair \((g,x)\) where \(g \in G\) and \(x \in F(g) \);
\item a morphism from \((g,x)\) to \((g',x')\) is an element \(h \in G\) such that \(g' = h g h^{-1}\) and
\(x' = F(h)(x)\);
\item composition of morphisms is multiplication of group elements.
\end{enumerate}

\begin{thm} 
\label{thm:general}
If \(G\) is a finite group and \(F \colon G \sslash G \to \Fin\Set\) is a functor, then 
\[           E(|F|) = \left|\Int F \right|. \]
\end{thm}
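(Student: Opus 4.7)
The plan is to recognize both sides of the identity as the same double sum, via the morphism-counting formula for groupoid cardinality recalled earlier in the paper:
\[ \left|\mathsf{G}\right| \;=\; \sum_{g \in \mathsf{G}} \frac{1}{|\mathrm{out}(g)|}. \]
The expected value on the left is by definition \(E(|F|) = \frac{1}{|G|} \sum_{g \in G} |F(g)|\), i.e.\ a sum of terms \(1/|G|\) indexed by pairs \((g,x)\) with \(x \in F(g)\). The goal is to recognize this as the output of the displayed formula applied to \(\Int F\).

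To deploy the formula I would first verify that \(\Int F\) is a finite groupoid. Finiteness is immediate from \(|G| < \infty\) and each \(F(g)\) being finite. That \(\Int F\) is a groupoid, not merely a category, uses the fact that its defining functor \(F\) has source the \emph{groupoid} \(G \sslash G\): each \(F(h)\) is automatically a bijection, so the morphism labeled \(h \colon (g,x) \to (hgh^{-1}, F(h)(x))\) has a two-sided inverse given by the morphism labeled \(h^{-1}\).

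Next I would count. The objects of \(\Int F\) are pairs \((g,x)\) with \(x \in F(g)\), so there are \(\sum_{g \in G} |F(g)|\) of them. For a fixed object \((g,x)\), a morphism out of \((g,x)\) is by clause (2) of the definition determined by a single choice of \(h \in G\), with target \((hgh^{-1}, F(h)(x))\) then forced; hence \(|\mathrm{out}(g,x)| = |G|\), uniformly in \((g,x)\). Substituting into the cardinality formula yields
\[ \left|\Int F\right| \;=\; \sum_{(g,x) \in \Int F} \frac{1}{|G|} \;=\; \frac{1}{|G|} \sum_{g \in G} |F(g)| \;=\; E(|F|). \]

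There is no genuine obstacle; the theorem is essentially a two-line application of the two formulas for groupoid cardinality collected in the previous section. The one subtlety worth flagging is that morphisms of \(\Int F\) are \emph{labeled} by elements of \(G\) without identification, so the count \(|\mathrm{out}(g,x)| = |G|\) is correct even when \((g,x)\) has a nontrivial stabilizer under the \(G\)-action; the stabilizer is absorbed into the existence of distinct morphisms with a common target, exactly as it is for \(S_n \sslash S_n\) in the motivating computation \(|\mathrm{Perm}_n| = 1\).
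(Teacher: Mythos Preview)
Your proof is correct and is essentially the same computation as the paper's, which also shows that every object of \(\Int F\) has exactly \(|G|\) outgoing morphisms. The only cosmetic difference is packaging: the paper records this fact as an isomorphism \(\Int F \cong \Ob(\Int F)\sslash G\) and then invokes \(|S\sslash G| = |S|/|G|\), whereas you apply the out-counting formula \(|\mathsf{G}| = \sum_{g} 1/|\mathrm{out}(g)|\) directly.
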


\begin{proof}
Let \(\Ob(\Int F)\) be the set of objects of \(\Int F\).  The group \(G\) acts on \(\Ob(\Int F)\), with \(h \in G\)
mapping the object \((g,x)\) to the object \((hgh^{-1}, F(h)(x))\).   Using the explicit description of \(\Int F\) in items (1)--(3) above, there is an evident isomorphism of groupoids
\[         \Int F \cong \Ob(\Int F) \sslash G  \]
that is the identity on objects and sends each morphism \(h\) from \((g,x)\) to \((g',x')\) to the analogous
morphism in \(\Ob(\Int F) \sslash G  \).    It follows that
\[    \left|\Int F \right| = \left| \Ob(\Int F) \sslash G  \right| = |\Ob(\Int F)|/|G| = 
\displaystyle{ \frac{1}{|G|} \sum_{g \in G} |F(g)| }  = E(|F|) . \qedhere\]
\end{proof}

The expected value of \(|F|\) is its integral over \(G\) with respect to normalized Haar measure, so we can write it as \(\Int \, |F|\), and then the theorem above takes an amusing though perhaps confusing form:
\[             \Int \, |F| = \left| \Int F \right|. \]

The above theorem sheds new light on the proof of Theorem \ref{thm:cycle_length_lemma}, because the \(S_n\)-set \(Q_{\vec{p}}\) in that proof is none other than \(\Ob(\Int C_{\vec p})\) for the functor \(C_{\vec{p}} \colon S_n \sslash S_n \to \Fin\Set \) assigning to any permutation the set where an element is an ordered \(p_1\)-tuple of distinct \(1\)-cycles, an ordered \(p_2\)-tuple of distinct \(2\)-cycles, and so on.   Thus, the groupoid \(\C_{\vec{p}}\) is equivalent to \(\Int C_{\vec{p}}\).   The same ideas apply to other structures that we can put on a finite set equipped with a permutation.

The above theorem may also let us derive results about random elements of other groups from equivalences of groupoids.   Results on \(\GL(n,\F_q)\) are promising candidates \cite{Fu}, since some are already proved using generating functions, which are connected to the category-theoretic techniques used here \cite{BD,BLL,J}, and there are powerful analogies between finite sets and finite-dimensional vector spaces over finite fields \cite{Le,Lo}.

\end{document}